\documentclass[12pt]{amsart}
\usepackage{geometry}
\usepackage[colorlinks,citecolor = red, linkcolor=blue,hyperindex]{hyperref}
\usepackage{euscript,eufrak,verbatim, mathrsfs}
\usepackage[psamsfonts]{amssymb}
\usepackage{bbm}
\usepackage{graphicx}
\usepackage{float, tikz}
\usepackage{pgfplots}
\pgfplotsset{compat=1.18}
\usepgfplotslibrary{fillbetween}
\usepackage{caption}
\usepackage{subcaption}

\usepackage{extarrows}
\usepackage[all, cmtip]{xy}

\usepackage{upref, xcolor, dsfont}
\usepackage{amsfonts,amsmath,amstext,amsbsy, amsopn,amsthm}
\usepackage{enumerate}

\usepackage{url}
\usepackage{mathtools}
\usepackage{bookmark}

\usepackage{euscript}
\usepackage{helvet}
\usepackage{courier}
\usepackage{type1cm}

\usepackage{multicol}
\usepackage[bottom]{footmisc}

\newtheorem{theorem}{Theorem}[section]
\newtheorem*{theorem*}{Theorem A}
\newtheorem{lemma}[theorem]{Lemma}

\newtheorem*{definition*}{Definition}
\newtheorem*{remark*}{Remark}

\newtheorem*{observation*}{Observation}

\newtheorem*{assumption*}{Assumption}

\theoremstyle{definition}

\newtheorem{question}{Question}
\newtheorem*{problem*}{Problem}

\theoremstyle{remark}
\newtheorem{remark}{Remark}[section]

\newcommand{\D}{\mathbb{D}}

\newcommand{\Log}{\mathrm{Log}}

\newcommand{\dd}{\mathrm{d}}

\begin{document}

\title{On the converse of the Shimorin--Pel\'aez--R\"atty\"a--Wick theorem}

\author{Yuerang Li}
\address{Yuerang Li: College of Mathematics and Statistics, Chongqing University, Chongqing 401331, China}
\email{yuerangli@outlook.com}

\author{Zipeng Wang}
\address{Zipeng Wang: College of Mathematics and Statistics, Chongqing University, Chongqing 401331, China}
\email{zipengwang2012@gmail.com; zipengwang@cqu.edu.cn}

\subjclass{Primary 46E22, 30H20.}
\keywords{Logarithmically subharmonic weight, $\widehat{\mathcal{D}}$-weight, weighted Bergman spaces.}

\maketitle

\begin{abstract}
We establish a converse of the Shimorin--Pel\'{a}ez--R\"{a}tty\"{a}--Wick theorem.
Specifically, we obtain necessary and sufficient conditions for a Shimorin kernel to be the kernel of a radial, logarithmically subharmonic weighted Bergman space.
\end{abstract}

\section{Introduction and main results}

For a nonnegative measurable function $\sigma$ on the unit disk $\D$, the weighted Bergman space $L_a^2(\sigma)$ consists of all holomorphic functions on $\D$ that are square-integrable with respect to $\sigma \dd A$. That is,
\[
L_a^2(\sigma) := \left\{ f \in \mathrm{Hol}(\D) : \|f\|_{L_a^2(\sigma)}^2 = \int_{\D} |f(z)|^2 \sigma(z) \dd A(z) < \infty \right\},
\]
where $\dd A(z) = \frac{dx dy}{\pi}$ denotes the normalized area measure on the complex plane.

The space $L_a^2(\sigma)$ is a reproducing kernel Hilbert space if it is a Hilbert space and the point evaluation functionals at all points of the unit disk are bounded. In that case, for each $z \in \mathbb{D}$, there exists a unique element $K_{\sigma}(\cdot,z) \in L_a^2(\sigma)$ such that
\[
f(z) = \int_{\mathbb{D}} f(\lambda) \overline{K_{\sigma}(\lambda,z)} \, \sigma(\lambda) \, \dd A(\lambda)
\]
for all $f \in L_a^2(\sigma)$. The function $K_{\sigma}(\cdot,z)$ is the reproducing kernel, and it is called the Bergman kernel of $L_a^2(\sigma)$ at the point $z$.

A particularly important class of weighted Bergman spaces is the one induced by a logarithmically subharmonic weight that reproduces the origin; that is, those $\sigma$ for which $\log\sigma$ is subharmonic on $\D$ and
$
p(0) = \int_{\D} p(z) \sigma(z) \dd A(z)
$
for every polynomial $p$. This weight induces a natural hyperbolic Riemannian metric on the unit disk and also serves as a starting point for proving the positivity of the Green function for the weighted biharmonic operator (see \cite{Shi02} for more details).

For a logarithmically subharmonic weight, the Shimorin problem in kernel representation theory asks whether its reproducing kernel is of Bergman-type, i.e., whether the kernel admits a Bergman-type integral representation. In \cite{Shi02}, Shimorin proved that every radial logarithmically subharmonic weight admits such a representation. He proved that for a radial logarithmically subharmonic weight, there exists a measure $\nu$ on $[0,1]$ such that its kernel $K_{\sigma}$ admits a Bergman-type integral representation
\[
K_{\sigma}(z,\lambda) = \frac{1}{1 - z\bar{\lambda}} \int_{0}^{1} \frac{1}{1 - rz\bar{\lambda}} \, d\nu(r), \qquad z, \lambda \in \mathbb{D}.
\]

Let $\nu$ be a positive Borel measure on $[0,1]$. We call the Bergman-type integral
\[
S_\nu(z,\lambda)=\frac{1}{1 - z\bar{\lambda}} \int_{0}^{1} \frac{1}{1 - rz\bar{\lambda}} \, \dd \nu(r)
\]
a Shimorin-type kernel on the unit disk.

Recall that a weight $\sigma$ on the unit disk is radial if $\sigma(z) = \sigma(|z|)$ for all $z \in \mathbb{D}$.
Among radial weights, an important class is that of $\widehat{\mathcal{D}}$-weight. For example, this class characterizes those radial weights for which the weighted Bergman projection is bounded from $L^\infty$ onto the Bloch space. We recall that a finite positive Borel measure $\omega$ on $[0,1)$ is said to belong to
$\widehat{\mathcal{D}}$ if there exists a constant $C>0$ such that
\[
\widehat{\omega}(r)
:=
\omega([r,1))
\le
C \widehat{\omega} \left(\frac{1+r}{2}\right),
\qquad r\in[0,1).
\]
One can consult \cite{PR14}, \cite{PR15}, \cite{PR21} and \cite{PRW19} for more details.

In 2019, Peláez, Rättyä, and Wick \cite{PRW19} systematically developed a Shimorin-type integral representation for Bergman kernels. In particular, they established a surprising and interesting connection between logarithmically subharmonic weights and $\widehat{\mathcal{D}}$-weights. Precisely, they showed that for any finite measure $\nu$ on $[0,1]$ satisfying
\begin{equation}\label{eq:prw}
\int_0^1 \frac{\dd \nu(r)}{1-r} = \infty,
\end{equation}
the Shimorin-type kernel is the reproducing kernel of a $\widehat{\mathcal{D}}$-weighted Bergman space.

\vspace{0.2cm}

\noindent \textbf{Shimorin–Peláez–Rättyä–Wick Theorem.} Every radial logarithmically subharmonic weight is a $\widehat{\mathcal{D}}$-weight, and its reproducing kernel has a Shimorin-type representation.

\vspace{0.2cm}

From its definition, we only know that a Shimorin-type kernel is positive definite; it is not necessarily a Bergman kernel. For example, when $\nu = \delta_0$, the Dirac measure at $0$, we have
\[
S_{\nu}(z,\lambda) = \frac{1}{1 - z\bar{\lambda}},
\]
which is the reproducing kernel of the Hardy space. It is therefore natural to ask when a Shimorin-type kernel represents a Bergman kernel. Moreover, this occurs when the weight is logarithmically subharmonic. Precisely, we study the following question concerning the converse of the Shimorin--Peláez--Rättyä--Wick theorem.

\begin{question}\label{pro:key}
Let $\nu$ be a positive Borel measure on $[0,1]$ and let $S_\nu$ denote the associated Shimorin-type kernel on the unit disk. Under what conditions is $S_\nu$ the kernel of a radial, logarithmically subharmonic weighted Bergman space?
\end{question}

The following result, essentially due to Shimorin, Peláez, Rättyä, and Wick, characterizes when a Shimorin-type kernel is a Bergman kernel.

\begin{theorem}\label{representation of K_v}
Let $\nu$ be a finite positive Borel measure on $[0,1]$. The Shimorin-type kernel $S_\nu$ is a Bergman kernel if and only if the measure $\nu$ satisfies the Peláez–Rättyä–Wick condition, i.e.,
\[
\int_{0}^{1} \frac{1}{1 - r} \dd \nu(r) = \infty.
\]
\end{theorem}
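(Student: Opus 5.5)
Expand the Shimorin-type kernel in a power series. Using $\frac{1}{1-z\bar\lambda}\cdot\frac{1}{1-rz\bar\lambda}=\sum_{n\ge0}\big(\sum_{k=0}^n r^k\big)(z\bar\lambda)^n$, we get
\[
S_\nu(z,\lambda)=\sum_{n\ge 0} c_n (z\bar\lambda)^n,\qquad c_n=\int_0^1 \sum_{k=0}^n r^k\,\dd\nu(r)=\int_0^1\frac{1-r^{n+1}}{1-r}\,\dd\nu(r).
\]
Here $(c_n)$ is positive and nondecreasing, and by monotone convergence $c_n\uparrow \int_0^1\frac{\dd\nu(r)}{1-r}$, where the integrand at $r=1$ is read as $+\infty$ if $\nu(\{1\})>0$. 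A radial weight $\sigma$ on $\D$ (with $L^2_a(\sigma)$ a reproducing kernel Hilbert space) has kernel $\sum_n (z\bar\lambda)^n/(2\sigma_{2n+1})$, where $\sigma_{t}=\int_0^1 s^{t}\sigma(s)\,\dd s$. So I first reduce ``Bergman kernel'' to a moment problem: $S_\nu$ is the kernel of some $L^2_a(\sigma)$ with radial $\sigma$ if and only if $1/(2c_n)$ are the odd moments $\sigma_{2n+1}$ of a nonnegative integrable function on $[0,1)$. For a possibly nonradial $\sigma$, I would average over rotations: since $S_\nu$ is rotation invariant, replacing $\sigma$ by its radialization gives the same norms on monomials, hence the same kernel. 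So it suffices to treat radial weights.

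\emph{Necessity.} Suppose $1/(2c_n)=\int_0^1 s^{2n+1}\sigma(s)\,\dd s$ with $\sigma\ge 0$ a function. By dominated convergence the right side tends to $0$ as $n\to\infty$, because the mass of $\sigma$ does not sit at $s=1$ (Lebesgue measure has no atom there). Hence $c_n\to\infty$, i.e.\ $\int_0^1\frac{\dd\nu}{1-r}=\infty$. This also explains the Hardy example: with $\nu=\delta_0$ we get $c_n\equiv1$, which would need a point mass at $1$.

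\emph{Sufficiency.} Here I would use the Peláez--Rättyä--Wick theorem quoted above, which says that under \eqref{eq:prw} $S_\nu$ is the kernel of a $\widehat{\mathcal{D}}$-weighted Bergman space. Two cases remain.
\begin{enumerate}[(i)]
\item If $\nu$ has an atom at $1$, then \eqref{eq:prw} holds trivially, but the PRW hypothesis may have been stated for $\nu$ on $[0,1)$. I would handle this by writing $\nu=\nu'+a\delta_1$, so that $S_\nu=S_{\nu'}+a(1-z\bar\lambda)^{-2}$.
\item If $\nu(\{1\})=0$ and the integral diverges, PRW applies directly.
\end{enumerate}

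The main obstacle is case (i): showing that the sequence $1/c_n$ is still an odd-moment sequence of some weight. For this I would verify directly that $d_n=1/(2c_n)$ is completely monotone in the appropriate variable, then apply Hausdorff's theorem to produce a measure $\mu$ on $[0,1]$ with $d_n=\int t^n\,\dd\mu$. Since $d_n\to0$, $\mu$ has no atom at $1$. I would then show $\mu$ is absolutely continuous, following the PRW argument. If that proves hard, I would instead note that $1/c_n\to0$ and reuse the PRW construction verbatim, since it only uses $c_n\uparrow\infty$ together with the structure $c_n=\int\frac{1-r^{n+1}}{1-r}\,\dd\nu$.
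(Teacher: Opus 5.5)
Your expansion $S_\nu=\sum_n c_n(z\bar\lambda)^n$ and your necessity argument are correct and match the paper. There, $1/c_n=2\int_0^1 r^{2n+1}\,\dd\omega(r)\to 0$ by dominated convergence because $\omega$ has no mass at $1$, and monotone convergence then gives $\int_0^1\frac{\dd\nu(r)}{1-r}=\infty$.

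The sufficiency direction, however, is not proved. Invoking the Pel\'aez--R\"atty\"a--Wick theorem amounts to citing the statement you are asked to prove. Moreover, the version quoted in the paper already covers $\nu$ on $[0,1]$, so your case (i) is not a real obstacle; in the paper's argument an atom $b\delta_1$ only adds a drift term $bs$ to the Bernstein function below. Your self-contained fallback omits exactly the non-trivial step. ``Verify directly that $1/(2c_n)$ is completely monotone'' comes with no mechanism. The splitting $S_\nu=S_{\nu'}+a(1-z\bar\lambda)^{-2}$ does not help by itself, because what must be a moment sequence is the reciprocal $1/(c_n'+a(n+1))$ of the summed coefficients.

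The missing idea is the Berg--Dur\'an theorem used in the paper. Write $c_n=f(n+1)$ with $f(s)=\int_0^1\frac{1-t^s}{1-t}\,\dd\nu(t)$. This $f$ is a Bernstein function, so $1/f$ is completely monotone and $1/f(s)=\int_0^1 t^s\,\dd\widetilde\kappa(t)$ for all $s>0$. Hence $\dd\mu=t\,\dd\widetilde\kappa$ satisfies $\int_0^1 t^n\,\dd\mu=1/c_n$. Your remark that $1/c_n\to 0$ forces $\mu(\{1\})=0$ is correct and also appears in the paper.

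You also skip the passage from these moments to the odd moments required by the Bergman norm. The paper sets $\omega=\frac{1}{2t}\,\dd(\psi_*\mu)$ with $\psi(x)=\sqrt{x}$. Finiteness of $\omega$ then requires $\int_0^1 r^{-1/2}\,\dd\mu(r)=1/f(1/2)<\infty$, i.e.\ the representation at a non-integer exponent, which Hausdorff's theorem for the integer moments does not supply.

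Finally, your intended proof that $\mu$ is absolutely continuous is neither sketched nor needed. The paper realizes $S_\nu$ as $K_\omega$ for a radial measure $\omega$ on $[0,1)$ and never asserts absolute continuity. Pursuing that step adds a difficulty your plan gives no way to resolve.
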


We now state our main result, which answers Question~\ref{pro:key}.

\begin{theorem}\label{thm:shimorin-kernel-characterization}
Let $\nu$ be a finite positive Borel measure on $[0,1]$ satisfying the Peláez–Rättyä–Wick condition.
The Shimorin-type kernel $S_\nu$ is the kernel of a radial logarithmically subharmonic weighted Bergman space if and only if there exists a logarithmically convex function
$h:(0,\infty)\to(0,\infty)$
such that
\[
\left(\int_0^1 \frac{1 - r^{n + 1}}{1 - r} \dd \nu(r)\right)
\left(\int_0^\infty e^{-nt} h(t) \dd t\right)=1,\qquad n\in\mathbb N,
\]
and
\[
\sup_{t\ge T} e^{t} h(t) < \infty
\]
for some constant $T>0$.
\end{theorem}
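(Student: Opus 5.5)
Since a radial weighted Bergman space has monomials as an orthogonal basis, everything reduces to matching norms of monomials. Expand
\[
S_\nu(z,\lambda)=\sum_{n\ge0}c_n (z\bar\lambda)^n,\qquad c_n=\int_0^1\frac{1-r^{n+1}}{1-r}\,\dd\nu(r),
\]
which follows from multiplying the geometric series $\sum (z\bar\lambda)^k$ and $\sum r^j(z\bar\lambda)^j$ and integrating in $\nu$. For a radial weight $\sigma$ we have $K_\sigma(z,\lambda)=\sum (z\bar\lambda)^n/\omega_{2n+1}$, where
\[
\omega_{2n+1}=\int_{\D}|z|^{2n}\sigma\,\dd A=2\int_0^1 s^{2n+1}\sigma(s)\,\dd s .
\]
Substituting $s=e^{-t/2}$ gives
\[
2\int_0^1 s^{2n+1}\sigma(s)\,\dd s=\int_0^\infty e^{-(n+1)t}\sigma(e^{-t/2})\,\dd t=\int_0^\infty e^{-nt}h(t)\,\dd t,
\]
where $h(t):=e^{-t}\sigma(e^{-t/2})$. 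Hence $S_\nu=K_\sigma$ if and only if $c_n\int_0^\infty e^{-nt}h(t)\,\dd t=1$ for all $n$. I would note that the case $n=0$ is included, since $c_0=\nu([0,1])$ and the $n=0$ identity is exactly the normalization.

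Next I translate the logarithmic subharmonicity of a radial $\sigma$. For radial functions, $\log\sigma$ is subharmonic on $\D\setminus\{0\}$ if and only if $\log\sigma(e^{u})$ is convex in $u=\log|z|$. Equivalently, $t\mapsto\log\sigma(e^{-t/2})$ is convex on $(0,\infty)$, and since $\log h(t)=-t+\log\sigma(e^{-t/2})$ differs from it by a linear term, this holds if and only if $h$ is log-convex. Subharmonicity across the origin needs one more fact: a radial function that is subharmonic on the punctured disk extends subharmonically to $0$ exactly when it is bounded above near $0$. In terms of $h$, this boundedness reads $\sup_{t\ge T}e^{t}h(t)=\sup\sigma(e^{-t/2})<\infty$. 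This is where the second condition comes from.

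For the necessity direction, start from a radial log-subharmonic $\sigma$ with $K_\sigma=S_\nu$. Define $h$ as above. Log-subharmonicity gives log-convexity of $h$, and the upper semicontinuity of $\log\sigma$ at $0$ gives $\sigma$ bounded near $0$, hence the sup condition. The coefficient identity comes from comparing Taylor coefficients.

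For sufficiency, given $h$, define $\sigma(z)=|z|^{-2}h(-2\log|z|)$, i.e. $\sigma(e^{-t/2})=e^{t}h(t)$. Then $\log\sigma$ is subharmonic on the punctured disk by the convexity criterion. It is bounded above near $0$ by the sup condition. I would then extend $\log\sigma$ at $0$ by its $\limsup$; the subtle point is checking the sub-mean value inequality at $0$. For this I would use that a convex function $\phi(u)$ of $u=\log|z|$ that is bounded as $u\to-\infty$ must be nondecreasing, so $\limsup_{z\to0}\log\sigma$ is an infimum and the mean value inequality holds. Alternatively, I would invoke the removable singularity theorem for subharmonic functions bounded above on a punctured neighborhood.

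The weight is integrable, because the $n=0$ identity makes $\int\sigma\,\dd A=1/c_0<\infty$. The coefficient identity for all $n$ gives $p(0)=\int p\sigma\,\dd A$, and finiteness of all moments together with radiality gives orthogonality of monomials. By construction the reproducing kernel of $L^2_a(\sigma)$ is $\sum(z\bar\lambda)^n/\omega_{2n+1}=S_\nu$. I must still check that $L^2_a(\sigma)$ is a reproducing kernel Hilbert space. Point evaluations are bounded by the kernel on the diagonal, and $S_\nu(z,z)<\infty$. Completeness requires $\sigma$ to be bounded below on compact sets away from the boundary; this holds because $\sigma$ is positive and log-convex in $\log|z|$, hence continuous on $0<|z|<1$, and nondecreasing toward $0$, so bounded below near $0$ as well. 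Theorem~\ref{representation of K_v} guarantees consistency with the Peláez–Rättyä–Wick condition, though it is not strictly needed.

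I expect the main obstacle to be the behavior at the origin: showing that the sup condition is exactly what makes $\log\sigma$ subharmonic on all of $\D$, rather than just on $\D\setminus\{0\}$, and doing so rigorously.
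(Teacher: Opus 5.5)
Your proposal follows the paper's proof in all essentials. The ingredients are the same: coefficient matching with $c_n=\int_0^1\frac{1-r^{n+1}}{1-r}\,\dd\nu(r)$, the substitution $h(t)=e^{-t}\sigma(e^{-t/2})$, convexity of $\log\sigma$ in $\log|z|$ (Lemmas~\ref{lem:radial-logsubharmonic-implies-convex} and~\ref{lem:convex-implies-radial-logsubharmonic}), the $n=0$ identity for integrability, and the sup condition as boundedness of $\sigma$ near $0$. Your monotonicity argument at the origin is a correct, self-contained replacement for the removable singularity theorem the paper quotes. It runs as follows: a convex $\phi(u)=\log\sigma(e^u)$ that is bounded above as $u\to-\infty$ is nondecreasing, so the $\limsup$ at $0$ equals $\inf\phi$, and the sub-mean value inequality at $0$ is then immediate.

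Two side claims in your sufficiency paragraph are wrong, although neither is needed.

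\textbf{Completeness.} By your own monotonicity fact, $\sigma(r)$ is nondecreasing in $r$. So $\sigma$ is bounded \emph{above} near $0$, not below, and it may vanish at $0$. For example, $h(t)=ce^{-2t}$ and $\nu=c^{-1}(\delta_0+\delta_1)$ satisfy every hypothesis, since both sides give $c_n=(n+2)/c$ and the atom at $1$ gives the Peláez–Rättyä–Wick condition. For this pair $\sigma(z)=c|z|^2$, which is not bounded below near $0$. No lower bound is required, however. For radial $\sigma$, integrating Parseval's identity over circles (Tonelli) gives $\|f\|^2=\sum_n|a_n|^2\|z^n\|^2$ for every $f=\sum a_nz^n$ in $L^2_a(\sigma)$. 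Hence $|f(z)|^2\le\|f\|^2S_\nu(z,z)\le\|f\|^2\,\nu([0,1])(1-|z|^2)^{-2}$. Norm-Cauchy sequences therefore converge locally uniformly, and Fatou's lemma shows the limit lies in the space and is the norm limit.

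\textbf{Reproducing the origin.} The claim $p(0)=\int p\sigma\,\mathrm{d}A$ is off by a factor: by radiality, $\int p\sigma\,\mathrm{d}A=p(0)\int\sigma\,\mathrm{d}A=p(0)/\nu([0,1])$. So $\sigma$ reproduces the origin only when $\nu$ is a probability measure. The theorem does not ask for this property, so just drop the sentence.

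\textbf{Positivity of $h$ (minor).} In the necessity direction, state explicitly why $h$ takes values in $(0,\infty)$, i.e.\ why $\sigma(r)>0$ for all $0<r<1$. The paper gets this from finiteness of circle means of a subharmonic function $\not\equiv-\infty$. In your convexity language, the point is that an extended-valued convex function equal to $-\infty$ at one interior point is identically $-\infty$.
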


\medskip
{\flushleft\bf Acknowledgements.} This work is supported by the National Natural Science Foundation of China (No.12471116) and 2025CDJ-IAIS YB-004 (Chongqing University).

\section{The proof of Theorem \ref{representation of K_v}}
The main goal of this section is to prove Theorem \ref{representation of K_v}, which follows from Theorem \ref{new-representation of K_v}. Note that a radial weight on the unit disk $\D$ is determined by a measure on $[0,1)$. More precisely, let $\omega$ be a finite positive Borel measure on $[0,1)$.
The associated radial measure $\omega\otimes m$ on $\D$ is defined by
\[
\dd(\omega\otimes m)(re^{i\theta})
=
r \dd \omega(r) \frac{\dd \theta}{\pi},
\]
where $m=\frac{\dd \theta}{\pi}$ is the normalized Haar measure on the unit circle $\mathbb{T}$. Then the weighted Bergman space is
\[
L_a^2(\omega) := \left\{ f\in \mathrm{Hol}(\D) : \|f\|_{L_a^2(\omega)}^2 = \int_{\D} |f(z)|^2 \dd(\omega\otimes m)(z)<\infty \right\}.
\]

\begin{theorem}\label{new-representation of K_v}
  Let $\nu$ be a finite positive Borel measure on $[0,1]$, and let $S_{\nu}$ be the corresponding Shimorin-type kernel.
There exists a finite positive Borel measure $\omega$ on $[0,1)$ with $\omega([r,1)) > 0$ for all $0 < r < 1$ such that
  \[
  K_{\omega}(z,\lambda) = S_{\nu}(z,\lambda), \quad z, \lambda \in \D,
  \]
  if and only if
  \[
  \int_{0}^{1} \frac{1}{1 - r}\dd \nu(r) = \infty.
  \]
Moreover, $\omega$ is a $\widehat{\mathcal{D}}$-weight.
\end{theorem}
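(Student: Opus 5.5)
Expanding both kernels in powers of $z\bar\lambda$ reduces the statement to a Hausdorff moment problem. On the Shimorin side, $S_\nu(z,\lambda)=\sum_{n\ge0} c_n (z\bar\lambda)^n$ with
\[
c_n=\int_0^1 \sum_{k=0}^n r^k\,\dd\nu(r)=\int_0^1\frac{1-r^{n+1}}{1-r}\,\dd\nu(r).
\]
On the Bergman side, $K_\omega(z,\lambda)=\sum_n (z\bar\lambda)^n/\|z^n\|^2$, and
\[
\|z^n\|^2_{L^2_a(\omega)}=2\int_0^1 r^{2n+1}\,\dd\omega(r)=:2\omega_{2n+1}.
\]
So $K_\omega=S_\nu$ holds if and only if $2\omega_{2n+1}=1/c_n$ for every $n\ge0$. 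After the substitution $s=r^2$ this says that the sequence $\mu_n:=1/c_n$ is the moment sequence $\int_0^1 s^n\,\dd\tau(s)$ of a finite positive measure $\tau$ on $[0,1]$. The condition $\omega([r,1))>0$ for all $r<1$ then corresponds to $\tau$ charging every neighbourhood of $1$. The point to watch is that $\tau$ must give no mass to $\{1\}$, since $\omega$ lives on $[0,1)$.

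\emph{Sufficiency.} By Hausdorff's theorem it is enough to show that $(1/c_n)$ is completely monotone. We have $c_n=\int_0^1 \phi_n(r)\,\dd\nu(r)$ with $\phi_n(r)=\sum_{k\le n}r^k$. Write $c_n=f(n)$, where $f(x)=\int_0^1\frac{1-r^{x+1}}{1-r}\,\dd\nu(r)$. Then $f$ is a Bernstein function on $(-1,\infty)$: it is nonnegative, and its derivative $f'(x)=\int_0^1 \frac{r^{x+1}\log(1/r)}{1-r}\,\dd\nu$ is completely monotone, because $r^{x+1}=e^{-(x+1)\log(1/r)}$. Since $1/f$ is completely monotone whenever $f$ is Bernstein, $1/c_n=g(n)$ with $g$ completely monotone on $[0,\infty)$. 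Bernstein's theorem gives a measure $\rho$ on $[0,\infty)$ with $g(x)=\int e^{-xt}\,\dd\rho(t)$, so $1/c_n=\int_{[0,1]}s^n\,\dd\tau(s)$ where $\tau$ is the pushforward of $\rho$ under $t\mapsto e^{-t}$.

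To rule out an atom of $\tau$ at $1$, note that $\tau(\{1\})=\lim_n 1/c_n$. By monotone convergence, $c_n\to\int_0^1\frac{\dd\nu}{1-r}=\infty$; this is exactly where the Peláez–Rättyä–Wick condition enters. Hence $\tau(\{1\})=0$. Moreover $\tau$ cannot vanish on any $(1-\epsilon,1]$, because otherwise $1/c_n$ would decay geometrically, contradicting $c_n\le (n+1)\nu([0,1])$. We then define $\omega$ as the pushforward of $\tau/2$ under $s\mapsto\sqrt s$, after adjusting so that the $r\,\dd\omega$ normalization matches $2\omega_{2n+1}$.

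\emph{Necessity.} If $\int\frac{\dd\nu}{1-r}<\infty$, then $c_n$ stays bounded, so $1/c_n\to\tau(\{1\})>0$. This contradicts $\omega([r,1))\to0$, which holds because $\omega$ is a finite measure on $[0,1)$; hence $\int_0^1 r^{2n+1}\,\dd\omega\to0$.

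\emph{The $\widehat{\mathcal{D}}$ property.} This is the main obstacle. I would use the known characterization from \cite{PR21}: $\omega\in\widehat{\mathcal D}$ if and only if its moments satisfy $\omega_x\le C\,\omega_{2x}$. Here that amounts to $c_{2n}\le C c_n$. This follows from
\[
\frac{1-r^{2n+1}}{1-r^{n+1}}\le 2,
\]
which gives $c_{2n}\le 2c_n$.
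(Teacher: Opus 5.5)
Your necessity argument, the exclusion of an atom of $\tau$ at $1$, and the proof that $\tau$ charges every neighbourhood of $1$ all match the paper. Deriving the moment representation of $1/c_n$ directly from ``$f$ Bernstein $\Rightarrow 1/f$ completely monotone'' is a legitimate substitute for the paper's citation of Berg--Dur\'an's Theorem~1.1. The gap is in the phrase ``after adjusting so that the $r\,\dd\omega$ normalization matches $2\omega_{2n+1}$.'' That adjustment is not cosmetic. If $2\int_0^1 t^{2n+1}\,\dd\omega(t)=\int_0^1 s^n\,\dd\tau(s)$ for all $n$, then Hausdorff uniqueness forces $2t\,\dd\omega(t)=\dd(\psi_*\tau)(t)$ with $\psi(s)=\sqrt s$. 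So on $(0,1)$ you must take $\dd\omega(t)=\frac{1}{2t}\,\dd(\psi_*\tau)(t)$. This requires $\tau(\{0\})=0$, and the resulting $\omega$ is finite if and only if $\int_{(0,1)} s^{-1/2}\,\dd\tau(s)<\infty$. Nothing you wrote gives this. Bernstein's theorem applied to $g$ on $[0,\infty)$ only says $\tau$ is finite, and a finite $\tau$ with density $s^{-1/2}$ near $0$ would produce $\dd\omega=\dd t/t$ there, an infinite measure. Since finiteness of $\omega$ is part of the statement, this step has to be proved. It is exactly where the paper uses Berg--Dur\'an's Theorem~4.1: writing $\dd\mu=t\,\dd\widetilde\kappa$ gives $\int_0^1 r^{-1/2}\,\dd\mu(r)=1/f(1/2)$ for the unshifted $f(s)=\int_0^1\frac{1-t^s}{1-t}\,\dd\nu(t)$.

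You already have the tool to close it. Your $f$ is positive and Bernstein on all of $(-1,\infty)$, so $g=1/f$ is completely monotone there. Applying Bernstein's theorem to $y\mapsto g(y-1)$ on $(0,\infty)$ and using uniqueness shows that $g(x)=\int_{[0,\infty)}e^{-xt}\,\dd\rho(t)$ for every $x>-1$. At $x=-1/2$ this gives $\int_0^1 s^{-1/2}\,\dd\tau(s)=g(-1/2)=\bigl(\int_0^1\frac{\dd\nu(r)}{1+\sqrt r}\bigr)^{-1}<\infty$.

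Separately, fix an index slip in the $\widehat{\mathcal D}$ step. With $2\omega_{2n+1}=1/c_n$, the test $x=2n+1$ involves $\omega_{4n+2}$, which is bounded \emph{below} by $\omega_{4n+3}=1/(2c_{2n+1})$, not by $1/(2c_{2n})$. So the inequality you need is $c_{2n+1}\le Cc_n$. This holds with $C=2$, since $\frac{1-r^{2n+2}}{1-r^{n+1}}=1+r^{n+1}$. Monotonicity of $x\mapsto\omega_x$ then passes from odd $x$ to all $x$ at the cost of a constant. With these repairs, your odd-moment-plus-monotonicity route to $\widehat{\mathcal D}$ is simpler than the paper's. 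The paper needs the moments at half-integers, $\omega_n=1/\bigl(2f(\frac{n+1}{2})\bigr)$, and a two-case comparison of $f(n+\frac12)$ with $f(\frac{n+1}{2})$.
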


\begin{remark}
We emphasize once again that this result is due to Shimorin, Peláez, Rättyä, and Wick. Based on recent work by Berg and Durán \cite{BD}, we present a self-contained proof that is of independent interest.
\end{remark}

\begin{remark}
  For any radial weight $\omega$, the Bergman space $L^2_a(\omega)$ is a reproducing kernel Hilbert space if and only if $\omega([r,1)) > 0$ for all $0 < r < 1$.
\end{remark}

We begin with the following two useful results of Berg and Durán.
\begin{lemma}[Theorem 1.1 in \cite{BD}]\label{existence of harmonic Hausdorff moment sequence}
Let $(a_n)_n$ be a Hausdorff moment sequence of a nonzero measure $\nu$. Then
the sequence $(b_n)_n$ defined by
\[
b_n=\frac{1}{a_0+\cdots+a_n}
\]
is a Hausdorff moment sequence, and its associated measure
$
\mu
$
satisfies $\mu(\{0\})=0$ and
\begin{equation}\label{eq:1.1}
\left(\int_0^1 \frac{1-t^{z+1}}{1-t} \dd \nu(t)\right) \left(\int_0^1 t^z \dd \mu(t)\right) = 1, \qquad \text{for } \operatorname{Re} z \ge 0.
\end{equation}
\end{lemma}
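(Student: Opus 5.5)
The plan is to reduce the statement to Bernstein's theorem on completely monotone functions. Throughout, I interpret the integrand $\frac{1-t^{z+1}}{1-t}$ at $t=1$ as its limit $z+1$. Set
\[
F(z)=\int_0^1 \frac{1-t^{z+1}}{1-t}\,\dd\nu(t)=(z+1)\,\nu(\{1\})+\int_{[0,1)}\frac{1-t^{z+1}}{1-t}\,\dd\nu(t).
\]
Since $\frac{1-t^{n+1}}{1-t}=1+t+\cdots+t^n$, we get $F(n)=a_0+\cdots+a_n$ for $n\in\mathbb N$. So it suffices to produce a positive measure $\mu$ on $(0,1]$ with $\int_0^1 t^x\,\dd\mu(t)=1/F(x)$ for real $x\ge 0$. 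Analytic continuation will then give \eqref{eq:1.1} on $\operatorname{Re} z\ge 0$.

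\textbf{Step 1 (Bernstein property).} Consider $x\mapsto F(x-1)$ on $(0,\infty)$. For $t\in(0,1)$, write $t=e^{-s}$ with $s>0$. Then $x\mapsto 1-t^{x}=1-e^{-sx}$ is a Bernstein function: it is nonnegative and its derivative $se^{-sx}$ is completely monotone. Multiplying by the positive constant $1/(1-t)$ and integrating against $\dd\nu$ preserves this, as does adding the linear term $x\,\nu(\{1\})$. The point $t=0$ contributes the constant $\nu(\{0\})$, which is also Bernstein. Hence $F(x-1)$ is a Bernstein function of $x>0$. I would justify differentiation under the integral by dominated convergence on compact subintervals.

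\textbf{Step 2 (reciprocal and Bernstein's theorem).} It is standard that the reciprocal of a strictly positive Bernstein function is completely monotone. Indeed, $1/f = g\circ f$ with $g(y)=1/y$ completely monotone and $f'$ completely monotone, and this can be checked by induction on the Fa\`a di Bruno expansion. Therefore $x\mapsto 1/F(x-1)$ is completely monotone on $(0,\infty)$, and so $x\mapsto 1/F(x)$ is completely monotone on $(-1,\infty)$, in particular near $x=0$. Here $F(0)=\nu([0,1])=a_0>0$ since $\nu\neq 0$, so $F(x)\ge a_0>0$ for $x\ge0$ and $1/F$ is bounded. By Bernstein's theorem there is a finite positive measure $\tau$ on $[0,\infty)$ with
\[
\frac1{F(x)}=\int_{[0,\infty)} e^{-xs}\,\dd\tau(s),\qquad x\ge 0.
\]
Let $\mu$ be the push-forward of $\tau$ under $s\mapsto e^{-s}$. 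Then $\mu$ lives on $(0,1]$, so $\mu(\{0\})=0$ automatically, and $\int_0^1 t^x\,\dd\mu(t)=1/F(x)$ for $x\ge0$. Taking $x=n$ shows that $b_n$ is the $n$-th moment of $\mu$, i.e. $(b_n)$ is a Hausdorff moment sequence.

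\textbf{Step 3 (extension to $\operatorname{Re} z\ge0$).} Both $F(z)$ and $\Phi(z)=\int_0^1 t^z\,\dd\mu(t)$ are holomorphic on $\operatorname{Re} z>0$ and continuous on $\operatorname{Re} z\ge 0$. For $F$ this uses the bound
\[
\left|\frac{1-t^{z+1}}{1-t}\right|\le C(1+|z|),
\]
uniformly in $t\in[0,1)$. For $\Phi$ it uses $|t^z|\le 1$ together with $\mu(\{0\})=0$. The function $F\Phi-1$ is then holomorphic on the right half-plane and vanishes on $(0,\infty)$, so it vanishes identically; by continuity it also vanishes on $\operatorname{Re} z=0$. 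This gives \eqref{eq:1.1}.

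The main obstacle I expect is Step 2. One must verify carefully that $1/F$ is completely monotone, which is the composition lemma for Bernstein functions, and that the representing measure has no atom at $s=\infty$. The second point is handled by applying Bernstein's theorem on $[0,\infty)$ to a function finite at $x=0$, so all the mass sits at finite $s$.
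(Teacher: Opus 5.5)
Your proof is correct. The paper itself gives no proof of this lemma; it is quoted from Berg--Dur\'an. The only in-paper point of comparison is therefore Lemma~\ref{representation of harmonic Hausdorff moment sequence}, and your argument is the mechanism behind it. Put $f(s)=F(s-1)=\int_0^1\frac{1-t^{s}}{1-t}\,\dd\nu(t)$ and let $\tau_0$ be the Bernstein measure of the completely monotone function $1/f$. Then your $\tau$ satisfies $\dd\tau(s)=e^{-s}\,\dd\tau_0(s)$, the image of $\tau_0$ under $s\mapsto e^{-s}$ is the potential kernel $\widetilde\kappa$, and your measure satisfies $\dd\mu(t)=t\,\dd\widetilde\kappa(t)$.

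The citation buys brevity and the link to product convolution semigroups. Your self-contained route proves both lemmas at once, and with one extra line it gives more than you state. Apply Bernstein's theorem to $y\mapsto 1/F(y-1)$ on $(0,\infty)$, which your Step 2 already shows is completely monotone. This yields $\int_0^1 t^{x}\,\dd\mu(t)=1/F(x)$ for all $x>-1$, not just $x\ge 0$. At $x=-1/2$ this is exactly the finiteness $\int_0^1 r^{-1/2}\,\dd\mu(r)=1/f(1/2)$, which is what the proof of Theorem~\ref{new-representation of K_v} invokes Lemma~\ref{representation of harmonic Hausdorff moment sequence} for.

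One point should be made explicit. The Hausdorff moment problem on $[0,1]$ is determinate, by Weierstrass approximation. So the measure you construct is \emph{the} measure associated with $(b_n)$, and the conclusion $\mu(\{0\})=0$ is genuinely a statement about it.
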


\begin{lemma}[Theorem 4.1 in \cite{BD}]\label{representation of harmonic Hausdorff moment sequence}
Let $\nu$ be a nonzero measure on $[0,1]$ and write
$
\nu = a\delta_0 + b\delta_1 + \widetilde{\nu}
$
with $a=\nu(\{0\}),  b=\nu(\{1\}), \widetilde{\nu}=\nu|_{(0,1)}.$
Let $f$ be the nonzero Bernstein function given by
\[
f(s) = \int_0^1 \frac{1 - t^s }{1 - t} \dd \nu(t) = a + bs + \int_{0}^{1} \frac{1 - t^s}{1 - t} \dd \widetilde{\nu}(t), \quad s > 0.
\]
Then the measure
$
\mu
$
determined by Lemma \ref{existence of harmonic Hausdorff moment sequence}
is given by
$
\dd \mu(x) = x \dd \widetilde{\kappa}(x),
$
where $\widetilde{\kappa}$ is the potential kernel of the corresponding product convolution semigroup satisfying
\[
\int_0^1 t^s \dd \widetilde{\kappa}(t) = \frac{1}{f(s)}, \quad s > 0.
\]
Moreover, the moment sequence
\[
b_n=\frac{1}{a_0+\cdots+a_n}
\]
of $\mu$ is given by
\[
b_n=\frac{1}{f(n+1)}, \qquad n\ge 0.
\]
\end{lemma}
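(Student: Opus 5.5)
The statement to be proved is the last one displayed, Lemma \ref{representation of harmonic Hausdorff moment sequence} (Theorem 4.1 in \cite{BD}). The plan is to identify $\mu$ through its moments and Bernstein-function theory, using Lemma \ref{existence of harmonic Hausdorff moment sequence} for existence.

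First, I will compute the partial sums of the moments $a_k=\int_0^1 t^k\,\dd\nu(t)$. By the geometric sum,
\[
a_0+\cdots+a_n=\int_0^1 (1+t+\cdots+t^n)\,\dd\nu(t)=\int_0^1\frac{1-t^{n+1}}{1-t}\,\dd\nu(t)=f(n+1).
\]
The integrand is read as $n+1$ at $t=1$ and as $1$ at $t=0$. This is exactly where the decomposition $f(s)=a+bs+\int_{(0,1)}\frac{1-t^s}{1-t}\,\dd\widetilde\nu(t)$ comes from: the atom at $1$ contributes $bs$ and the atom at $0$ contributes $a$. This already gives $b_n=1/f(n+1)$.

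Next, I will check that $f$ is a Bernstein function. Write $1-t^s=1-e^{-s\log(1/t)}$. Then $f$ is a nonnegative combination of a constant, a linear term, and functions $s\mapsto 1-e^{-sx}$ with $x=\log(1/t)>0$. Pushing $\frac{\dd\widetilde\nu(t)}{1-t}$ forward under $t\mapsto\log(1/t)$ yields a Lévy measure $\lambda$ on $(0,\infty)$. It satisfies $\int\min(1,x)\,\dd\lambda<\infty$, because $\frac{1-t^s}{1-t}$ is bounded near $t=1$ and near $t=0$.

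Since $f$ is Bernstein and nonzero, standard theory gives a convolution semigroup $(\eta_\tau)$ on $[0,\infty)$ with Laplace transforms $e^{-\tau f}$. Its potential measure $\kappa=\int_0^\infty\eta_\tau\,\dd\tau$ satisfies $\mathcal L\kappa(s)=1/f(s)$ for $s>0$. Transporting by $x\mapsto e^{-x}$ turns this into a product convolution semigroup on $(0,1]$ with potential kernel $\widetilde\kappa$, and
\[
\int_0^1 t^s\,\dd\widetilde\kappa(t)=\frac1{f(s)}.
\]

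The main obstacle is that $\widetilde\kappa$ need not be finite. If $a=0$, then $f(0)=0$, so $1/f(s)$ blows up as $s\to0$ and $\widetilde\kappa$ may have infinite mass near $t=1$. The remedy is to define $\dd\mu(x)=x\,\dd\widetilde\kappa(x)$. Its moments are $\int x^{n}\,\dd\mu=\int x^{n+1}\,\dd\widetilde\kappa=1/f(n+1)$, which is finite because $f(1)\ge f$ evaluated on $\nu\neq0$ gives $f(1)=\nu([0,1])>0$. So $\mu$ is a finite measure on $[0,1]$ with moments $b_n$, and $\mu(\{0\})=0$ since $\widetilde\kappa$ lives on $(0,1]$. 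Determinacy of the Hausdorff moment problem then identifies $\mu$ with the measure of Lemma \ref{existence of harmonic Hausdorff moment sequence}. The same computation for real $s>0$, extended by analyticity to $\operatorname{Re}z\ge0$, reproduces \eqref{eq:1.1}.

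The delicate points to verify carefully are:
\begin{itemize}
\item the change of variables between additive and multiplicative semigroups, including the atoms;
\item that the transported potential measure assigns no mass at $0$, which holds because $e^{-x}\to0$ only as $x\to\infty$.
\end{itemize}
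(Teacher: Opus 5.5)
The paper gives no proof of this lemma. It is quoted from Berg--Durán (Theorem 4.1 in \cite{BD}), so there is no in-paper argument to compare against. Your reconstruction is correct, and it works in the Bernstein-function/potential-kernel framework in which the cited result is stated. Its steps are:
\begin{enumerate}
\item Summing the geometric series under the integral gives $a_0+\cdots+a_n=f(n+1)$.
\item $f$ has the Lévy--Khintchine form with killing $a$, drift $b$, and Lévy measure the image of $(1-t)^{-1}\dd\widetilde\nu(t)$ under $t\mapsto\log(1/t)$.
\item The potential measure $\kappa=\int_0^\infty\eta_\tau\,\dd\tau$ has Laplace transform $1/f$.
\item Transport by $x\mapsto e^{-x}$ produces $\widetilde\kappa$ on $(0,1]$.
\item Hausdorff determinacy identifies $x\,\dd\widetilde\kappa$ with $\mu$.
\end{enumerate}
Your construction produces $\mu$ directly, including $\mu(\{0\})=0$. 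So it also reproves the existence part of Lemma \ref{existence of harmonic Hausdorff moment sequence}, which you then use only to name the measure.

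There is one slip in your discussion of the obstacle. When $a=0$, the infinite mass of $\widetilde\kappa$ accumulates near $t=0$, not near $t=1$. For every $\varepsilon>0$ and $s>0$,
\[
\widetilde\kappa([\varepsilon,1])\le \varepsilon^{-s}\int_0^1 t^s\,\dd\widetilde\kappa(t)=\varepsilon^{-s}/f(s)<\infty .
\]
For example, $\nu=\delta_1$ gives $f(s)=s$, $\kappa$ equal to Lebesgue measure on $[0,\infty)$, and $\dd\widetilde\kappa(t)=\dd t/t$. This is exactly why multiplying by the factor $x$, which vanishes at $0$, yields a finite measure. Your actual finiteness computation is unaffected:
\[
\mu([0,1])=\int_0^1 x\,\dd\widetilde\kappa(x)=\frac1{f(1)}=\frac1{\nu([0,1])}<\infty .
\]
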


\begin{proof}[Proof of Theorem \ref{new-representation of K_v}]
  Let $\nu$ be a finite positive Borel measure on $[0,1]$, and let $S_{\nu}$ be the corresponding Shimorin-type kernel.
  Suppose that $$S_{\nu}(z,\lambda) = K_{\omega}(z,\lambda), \quad z, \lambda \in \D,$$ for some measure $\omega$ on $[0,1)$ with $\omega([r,1)) > 0$ for all $0 < r < 1$.
  Note that
  \[
  S_{\nu}(z,\lambda) = \sum_{n = 0}^{\infty} \left(\int_0^1 \frac{1 - r^{n + 1}}{1 - r} \dd \nu(r)\right) (z\bar{\lambda})^n,
  \]
  and
  \[
  K_{\omega}(z,\lambda) = \sum_{n = 0}^{\infty} \frac{1}{2\omega_{2n + 1}} (z\bar{\lambda})^n,
  \]
  where $\omega_{2n + 1} = \int_0^1 r^{2n + 1} \dd \omega(r)$.
  Then we have
  \[
  \int_{0}^{1} \frac{1 - r^{n + 1}}{1 - r}\dd \nu(r) = \frac{1}{2 \omega_{2n + 1}}.
  \]
   By the Lebesgue dominated convergence theorem,
  \[
  \lim_{n \to \infty} \omega_{2n + 1} = \lim_{n \to \infty} \int_0^1 r^{2n + 1} \dd \omega(r) =  0.
  \]
  Hence, by the monotone convergence theorem,
  \[
  \int_{0}^{1} \frac{1}{1 - r} \dd \nu(r) = \lim_{n \to \infty} \int_0^1 \frac{1 - r^{n + 1}}{1 - r}\dd \nu(r) = \lim_{n \to \infty} \frac{1}{2 \omega_{2n + 1}} = \infty.
  \]

  Conversely, consider the Hausdorff moment sequence of the measure $\nu$
  \[
  v_k = \int_0^1 r^k \dd \nu(r), \quad k \in \mathbb{N},
  \]
  by Lemma \ref{existence of harmonic Hausdorff moment sequence}, there exists a finite positive Borel measure $\mu$ on $[0,1]$ such that
  \[
  \int_0^1 \frac{1 - r^{n + 1}}{1 - r}\dd \nu(r) \cdot \int_0^1 r^n \dd \mu(r) = 1 , \quad n \in \mathbb{N}.
  \]
  Let $\psi(x) = \sqrt{x}$. By a change of variables,
  \[
  \int_{0}^{1} r^n \dd \mu(r) = \int_{0}^{1} t^{2n} \dd (\psi_{*}\mu)(t) = \int_0^1 t^{2n + 1} \frac{1}{t} \dd (\psi_{*} \mu)(t).
  \]
  Then we can define a measure $\omega$ on $[0,1]$ by
  \begin{equation}\label{e:measuremu}
  \omega(E) =  \frac{1}{2} \int_{E} \frac{1}{t} \dd (\psi_{*}\mu)(t) =  \frac{1}{2}\int_{\psi^{-1}(E)}r^{-1 / 2} \dd \mu(r),
  \end{equation}
  where $E\subset[0,1]$ is a Borel set. Hence we have
  \[
  2 \omega_{2n + 1} = \int_0^1 t^{2n + 1} \frac{1}{t} \dd (\psi_{*}\mu)(t) = \int_0^1 r^n \dd \mu(r) = \frac{1}{\int_0^1 \frac{1 - r^{n + 1}}{1 - r}\dd \nu(r)}.
  \]
To complete the proof, it remains to show that
  \begin{itemize}
    \item $\omega$ is a finite measure on $[0,1]$, i.e., $\int_{0}^{1} r^{- 1/ 2} \dd \mu(r) < \infty$;
    \item $\omega(\{1\}) = 0$, that is, $\omega$ is concentrated on $[0,1)$;
    \item For all $0 < r < 1$, $\omega([r,1)) > 0$;
    \item $\omega$ is a $\widehat{\mathcal{D}}$-weight.
  \end{itemize}
  Let $f$ be the nonzero Bernstein function given by
  \[
  f(s) = \int_0^1 \frac{1 - t^{s}}{1 - t} \dd \nu(t).
  \]
  By Lemma \ref{representation of harmonic Hausdorff moment sequence}, there is a $\sigma$-finite positive Borel measure $\widetilde{\kappa}$ on $[0,1]$ such that
  \[
  \dd \mu(t) = t\dd \widetilde{\kappa}(t), \quad \text{and} \quad \int_{0}^{1} t^s \dd \widetilde{\kappa}(t) = \frac{1}{f(s)}, \ s > 0.
  \]
  Hence,
  \[
  \int_0^1 r^{-1 / 2}\dd \mu(r) = \int_0^1 t^{1/2} \dd \widetilde{\kappa}(t) = \frac{1}{f(1/2)} < \infty.
  \]
  Next, to show $\omega(\{1\}) = 0$, we only need to show $\mu(\{1\}) = 0$ (Indeed, $\omega(\{1\}) = \frac{1}{2} \mu(\{1\})$). If $\mu(\{1\}) \ne 0$, then
  \[
  \int_0^1 r^{n} \dd \mu(r) > \mu(\{1\}) > 0.
  \]
  Since $\int_{0}^{1} \frac{1}{1 - r}\dd \nu(r) = \infty$, we have
  \[\lim_{n\to\infty}\int_{0}^{1} \frac{1 - r^{n + 1}}{1 - r} \dd \nu(r) = \infty.\]
  This contradicts the fact that
  \[
  \int_0^1 \frac{1 - r^{n + 1}}{1 - r}\dd \nu(r) \int_{0}^{1} r^n \dd \mu(r) = 1.
  \]
  Finally, we show that for all $0 < r < 1$, \[\mu([r,1)) = \int_{r}^1 \dd \mu(t) > 0.\]
  Otherwise, if there exists $r_0 \in (0,1)$ such that $\mu([r_0,1)) = 0$, then
  \[
  \int_{0}^{1} r^{n} \dd \mu(r) = \int_{0}^{r_0}r^n \dd \mu(r) \le r_0^n \mu([0,1)),
  \]
  and note that
  \[
  \int_0^1 \frac{1 - r^{n + 1}}{1 - r}\dd \nu(r) = \sum_{k = 0}^{n} \int_0^1 r^k \dd \nu(r) \le (n + 1) \nu([0,1]).
  \]
  Hence,
  \[
  \int_0^1 \frac{1 - r^{n + 1}}{1 - r}\dd \nu(r) \int_{0}^{1} r^n \dd \mu(r) \le (n + 1)r_0^{n} \mu([0,1)) \nu([0,1]).
  \]
  The right-hand side of the inequality converges to zero as $n \to \infty$, which is a contradiction.

Hence,
\[
\omega([r,1)) = \frac{1}{2} \int_{[r,1)} t^{-1} \dd (\psi_*\mu)(t) = \frac{1}{2} \int_{[r^2,1)} s^{-1/2} \dd \mu(s) \geq \frac{1}{2} \mu([r^2,1)) > 0.
\]
It remains to show that $\omega$ is a $\widehat{\mathcal{D}}$-weight.
This is equivalent to showing \cite{PRW19} that there exists a finite positive constant $C$ such that $$\omega_n \le C \omega_{2n}$$ for all $n \in \mathbb{N}$, where $\omega_n = \int_0^1 r^n \dd \omega(r)$.

The case $n=0$ is trivial. Let $n\ge 1$. By the definition \eqref{e:measuremu} of the measure $\omega$ and a change of variables, we have
\[
\omega_n = \int_0^1 t^n \dd \omega(t) =\frac{1}{2}\int_0^1 t^{n-1} \dd (\psi_*\mu)(t) =\frac{1}{2}\int_0^1 t^{(n-1)/2} \dd \mu(t).
\]
Here $\psi(x) = \sqrt{x}$.
By Lemma \ref{representation of harmonic Hausdorff moment sequence}, for every $s\ge 0$,
\[
\int_0^1 t^s \dd \mu(t)=\frac{1}{f(s+1)}.
\]
Therefore, for every $n\ge 1$,
\[
\omega_n=\frac{1}{2f \left(\frac{n+1}{2}\right)},
\qquad
\omega_{2n}=\frac{1}{2f \left(n+\frac{1}{2}\right)}.
\]
Thus
\[
\frac{\omega_n}{\omega_{2n}}
=
\frac{f \left(n+\frac{1}{2}\right)}{f \left(\frac{n+1}{2}\right)}.
\]
It remains to prove that
\[
f \left(n+\frac{1}{2}\right)\le 2 f \left(\frac{n+1}{2}\right),
\qquad n\ge 1.
\]

For $k\ge 0$, write $\nu_k:=\int_0^1 r^k \dd \nu(r).$
Since $\nu$ is a finite positive measure on $[0,1]$, the sequence $(\nu_k)_{k\ge 0}$ is decreasing, and $f$ is increasing. We now distinguish two cases.

\medskip
\noindent\textbf{Case 1: $n=2m-1$ for $m\ge 1$.}
Then
\[
f \left(n+\frac{1}{2}\right)= f(2m - \frac{1}{2}) \le f(2m)=\sum_{k=0}^{2m-1}\nu_k,
\qquad
f \left(\frac{n+1}{2}\right)=f(m)=\sum_{k=0}^{m-1}\nu_k.
\]
Since $(\nu_k)$ is decreasing, we have
$$ \sum_{k=m}^{2m-1}\nu_k \le \sum_{k=0}^{m-1}\nu_k.$$
Hence
\[
f(2m)=\sum_{k=0}^{m-1}\nu_k+\sum_{k=m}^{2m-1}\nu_k
\le 2\sum_{k=0}^{m-1}\nu_k
=2f(m).
\]
Therefore,
\[
f \left(n+\frac{1}{2}\right)\le 2 f \left(\frac{n+1}{2}\right).
\]

\medskip
\noindent\textbf{Case 2: $n=2m$ for $m\ge 1$.}
Since $f$ is increasing, we get
\[
f \left(n+\frac{1}{2}\right)=f \left(2m+\frac{1}{2}\right)\le f(2m+1)=\sum_{k=0}^{2m}\nu_k.
\]
On the other hand,
\begin{align*}
f \left(\frac{n+1}{2}\right)
=f \left(m+\frac{1}{2}\right)
&=\int_0^1 \frac{1-r^{m+1/2}}{1-r} \dd \nu(r)\\
&=\sum_{k=0}^{m-1}\nu_k+\int_0^1 r^m\frac{1-r^{1/2}}{1-r} \dd \nu(r).
\end{align*}
Since, for every $r\in(0,1)$,
\[
\frac{1 - r^{1/2}}{1 - r} = \frac{1}{1 + r^{1/2}} \ge \frac{1}{2},
\]
we have
\[
f \left(\frac{n+1}{2}\right) \ge \sum_{k=0}^{m-1}\nu_k+\frac{1}{2}\int_0^1 r^m \dd \nu(r) = \sum_{k=0}^{m-1}\nu_k + \frac{1}{2} \nu_m.
\]
Moreover, using again that $(\nu_k)$ is decreasing, we have
\[
\sum_{k=m+1}^{2m}\nu_k \le \sum_{k=0}^{m-1}\nu_k.
\]
Therefore,
\begin{align*}
f \left(n+\frac{1}{2}\right)
&\le \sum_{k=0}^{2m}\nu_k =\sum_{k=0}^{m-1}\nu_k+\nu_m+\sum_{k=m+1}^{2m}\nu_k\\
&\le 2\sum_{k=0}^{m-1}\nu_k+\nu_m = 2\left(\sum_{k=0}^{m-1}\nu_k+\frac{1}{2} \nu_m\right)\\
&\le 2 f \left(m+\frac{1}{2}\right) =2 f \left(\frac{n+1}{2}\right).
\end{align*}

Combining the two cases, we obtain
\[
f \left(n+\frac{1}{2}\right)\le 2 f \left(\frac{n+1}{2}\right),
\qquad n\ge 1.
\]
Hence, for every $n\ge 1$,
\[
\frac{\omega_n}{\omega_{2n}}
=
\frac{f \left(n+\frac{1}{2}\right)}{f \left(\frac{n+1}{2}\right)}
\le 2.
\]
Together with the trivial case $n=0$, this shows that
\[
\omega_n\le 2 \omega_{2n}, \qquad n\in \mathbb N.
\]
Therefore $\omega$ is a $\widehat{\mathcal{D}}$-weight. This completes the proof.
\end{proof}

\section{The proof of Theorem \ref{thm:shimorin-kernel-characterization}}

The following theorem characterizes those Shimorin-type kernels that arise from
radial logarithmically subharmonic weights. In particular,
Theorem \ref{thm:shimorin-kernel-characterization} is an immediate consequence of
Theorem \ref{thm:new-shimorin-kernel-characterization} together with
Theorem \ref{representation of K_v}.

\begin{theorem}\label{thm:new-shimorin-kernel-characterization}
Let $\nu$ be a finite positive Borel measure on $[0,1]$ satisfying the Peláez–Rättyä–Wick condition.
Let $S_\nu$ be the corresponding Shimorin-type kernel.
Then there exists a radial logarithmically subharmonic weight on the unit disk
$
\omega(z)=\omega(|z|)
$
such that
\[
\int_0^1 r\omega(r) \dd r<\infty
\]
and
\[
K_\omega(z,\lambda)=S_\nu(z,\lambda), \qquad z,\lambda\in\mathbb D,
\]
if and only if there exists a logarithmically convex function
\[
h:(0,\infty)\to(0,\infty)
\]
such that
\[
\left(\int_0^1 \frac{1 - r^{n + 1}}{1 - r} \dd \nu(r)\right)
\left(\int_0^\infty e^{-nt}h(t) \dd t\right)=1,
\qquad n\in\mathbb N,
\]
and such that there exists $T>0$ with
\[
\sup_{t\ge T} e^t h(t)<\infty.
\]
\end{theorem}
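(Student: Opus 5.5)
The plan is to use the substitution $r=e^{-t/2}$, which turns radial weights into functions of $t\in(0,\infty)$ and turns log-subharmonicity into log-convexity.

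\textbf{Translating the weight.} For a radial weight $\omega(z)=\omega(|z|)$, the function $\log\omega$ is subharmonic on $\D$ if and only if $\log\omega(e^{s})$ is convex in $s\in(-\infty,0)$ and nondecreasing. Here we allow the value $-\infty$, and at the origin we use the fact that a radial subharmonic function is a nondecreasing convex function of $\log r$. Setting $r=e^{-t/2}$, so that $r\,\dd r=\tfrac12 e^{-t}\dd t$, the moments become
\[
2\int_0^1 r^{2n+1}\omega(r)\,\dd r=\int_0^\infty e^{-nt}\,e^{-t}\omega(e^{-t/2})\,\dd t .
\]
From the proof of Theorem \ref{new-representation of K_v}, the identity $K_\omega=S_\nu$ is equivalent to
\[
\left(\int_0^1\frac{1-r^{n+1}}{1-r}\,\dd\nu(r)\right)\cdot 2\omega_{2n+1}=1,\qquad n\in\mathbb N .
\]
Using the normalization $\dd A=\dd x\,\dd y/\pi$, we have $\|z^n\|^2=2\int_0^1 r^{2n+1}\omega(r)\,\dd r$, so the factor is exactly $2\omega_{2n+1}$. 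This suggests the correspondence
\[
h(t)=e^{-t}\omega(e^{-t/2}),\qquad\text{equivalently}\qquad \omega(r)=r^{-2}h(-2\log r).
\]
Under this correspondence the moment condition in the statement is exactly the kernel identity, and the finiteness condition $\int_0^1 r\omega\,\dd r<\infty$ is exactly the case $n=0$, namely $\int_0^\infty h<\infty$.

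\textbf{Matching log-convexity and monotonicity.} We have $\log h(t)=-t+\log\omega(e^{-t/2})$. Since $t\mapsto -t$ is affine, $\log h$ is convex if and only if $t\mapsto\log\omega(e^{-t/2})$ is convex, which is convexity of $\log\omega(e^{s})$ in $s$. The remaining point is the monotonicity of $\omega$, i.e. that $t\mapsto e^{t}h(t)$ is nonincreasing. This is where the hypothesis $\sup_{t\ge T}e^th(t)<\infty$ enters.

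\begin{itemize}
\item \emph{Necessity.} Take $\omega$ log-subharmonic and radial. Then $\omega$ is nondecreasing in $r$ and bounded near $0$ by its value at some $r_0$. Hence $e^th(t)=\omega(e^{-t/2})$ is bounded for $t\ge T$. Finiteness of $h$ (values in $(0,\infty)$) follows from the moment identity: the weight cannot vanish identically, and a radial log-subharmonic function that is not $\equiv-\infty$ is finite away from the origin. The one-point degeneracy at $0$ should be handled separately.
\item \emph{Sufficiency.} Define $\omega(r)=r^{-2}h(-2\log r)$, so $g(t):=e^th(t)$ is log-convex on $(0,\infty)$ and bounded on $[T,\infty)$. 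A convex function on a half-line that is bounded above at $+\infty$ must be nonincreasing everywhere. So $\log g$ is convex and nonincreasing in $t$, hence convex and nondecreasing in $s=-t/2$. Thus $\log\omega$ is subharmonic on $\D\setminus\{0\}$ and bounded above near $0$, so by the removable singularity theorem for subharmonic functions it extends to a subharmonic function on $\D$.
\end{itemize}

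\textbf{Expected main obstacle.} The delicate point is the bookkeeping at the origin: the equivalence between radial log-subharmonicity on all of $\D$ and convexity plus monotonicity, including the value at $0$ and the removable-singularity step. A secondary check is that the moment identity pins down $\omega$ only almost everywhere. This should be harmless, because the weight is determined as a measure by its moments (a Hausdorff moment problem after $r^2\mapsto x$).
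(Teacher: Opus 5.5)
Your proposal is correct and follows essentially the same route as the paper: the substitution $h(t)=e^{-t}\omega(e^{-t/2})$, the change of variables that turns the kernel-coefficient identity into the Laplace-moment identity (with $n=0$ giving $\int_0^1 r\omega(r)\,\dd r<\infty$), matching log-convexity of $h$ with convexity of $\log\omega$ in $\log r$, and the growth condition corresponding to boundedness of $\omega$ near $0$ together with the removable singularity theorem. The only differences are these: you cite the classical characterization of radial subharmonic functions as convex nondecreasing functions of $\log r$, whereas the paper proves the two directions it needs directly (a maximum-principle comparison on annuli with the harmonic function $a\log|z|+b$, and subharmonicity of $g(\operatorname{Re}(-2\Log z))$ via composition with a holomorphic map); and your monotonicity step in the sufficiency part is unnecessary, since the hypothesis $\sup_{t\ge T}e^th(t)<\infty$ already gives boundedness of $\omega$ near $0$.
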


For the proof of Theorem \ref{thm:new-shimorin-kernel-characterization}, we need some results on potential theory.

\begin{lemma}[{\cite[Corollary~2.5.2]{Ran95}}]\label{prop of subharmonic function}
Let \(u\) be a subharmonic function on a domain \(D\subset \mathbb{C}\), and assume that
$
u\not\equiv -\infty.
$
If \(\{z: |z - w| \le \rho \} \subset D\), then
\[
\frac{1}{2\pi}\int_{0}^{2\pi} u \left(w+\rho e^{i\theta}\right) \dd \theta>- \infty.
\]
\end{lemma}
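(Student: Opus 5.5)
The statement just displayed is Lemma~\ref{prop of subharmonic function}, a standard fact from potential theory: the circle means of a subharmonic function $u\not\equiv-\infty$ are finite on every closed disk contained in the domain. I will not reprove the general theory. The plan is to reduce the claim to two ingredients: the sub-mean value property, and the fact that the polar set $\{u=-\infty\}$ has planar measure zero when $u\not\equiv-\infty$ on a connected domain.

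The first step handles integrability. Since $u$ is upper semicontinuous, it is bounded above on the compact circle $\{|z-w|=\rho\}$. Hence the integral $\frac{1}{2\pi}\int_0^{2\pi}u(w+\rho e^{i\theta})\,\dd\theta$ is well defined in $[-\infty,\infty)$, and the only issue is excluding the value $-\infty$. I will argue by contradiction and suppose the circle mean at radius $\rho$ equals $-\infty$.

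The second step propagates this. For a subharmonic $u$ and a closed disk inside $D$, the Poisson integral of $u$ over the circle $|z-w|=\rho$ majorizes $u$ in the open disk. This follows by approximating $u$ from above by a decreasing sequence of continuous functions on the circle, solving the Dirichlet problem for each, and applying the maximum principle. If the boundary mean is $-\infty$, then, since the Poisson kernel is bounded below by a positive constant (depending on $z$) at each interior point $z$, the Poisson integral is $-\infty$ there. So $u\equiv-\infty$ on the open disk $\{|z-w|<\rho\}$.

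The third step, which I expect to be the main obstacle, is showing that this forces $u\equiv-\infty$ on all of $D$, contradicting the hypothesis. I will use connectedness. Let $E$ be the set of points of $D$ having a neighbourhood on which $u\equiv-\infty$. $E$ is open by definition and nonempty by the second step. For closedness in $D$, take $z_0\in\overline{E}\cap D$ and a small closed disk $\overline{B}(z_0,r)\subset D$. The set $E\cap B(z_0,r)$ is a nonempty open set. By the sub-mean value property with area means, $u(z)\le \frac{1}{\pi s^2}\int_{B(z,s)}u\,dx\,dy$ for any disk $B(z,s)\subset D$. Choosing $z$ near $z_0$ and $s$ so that $B(z,s)$ still lies in $D$ and contains a piece of $E$ (hence a set of positive measure where $u=-\infty$), while $u$ stays bounded above on the compact disk, gives area mean $-\infty$. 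So $u\equiv-\infty$ near $z_0$, and $z_0\in E$. Hence $E=D$, a contradiction, and the circle mean is finite, as the statement asserts.
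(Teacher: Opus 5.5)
The paper gives no proof of this lemma and simply quotes it from Ransford. Your argument is correct and is essentially the standard proof behind that citation: the Poisson integral inequality forces $u\equiv-\infty$ on the open disk, and the connectedness/area-mean argument (the one that gives local integrability of $u$) then forces $u\equiv-\infty$ on all of $D$.

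The one thing to tighten is the closedness part of your third step, where you need $u(z)=-\infty$ for every $z$ in a neighbourhood of $z_0$, not just for one $z$. Fix one point $e\in E$ and one radius that work for all such $z$ at once. For example, take $\overline{B}(z_0,r)\subset D$, $|e-z_0|<r/4$, and $s=r/2$. Then for every $z$ with $|z-z_0|<r/4$, the closed disk $\overline{B}(z,s)$ lies in $D$ and contains a neighbourhood of $e$ on which $u=-\infty$.
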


\begin{lemma}\label{lem:radial-logsubharmonic-implies-convex}
Let $\varphi$ be a radial logarithmically subharmonic function on $\mathbb D$ with $\varphi \not\equiv 0$. Set
\[
u(z):=\log \varphi(z), \qquad z\in \mathbb D,
\]
where $u$ is allowed to take the value $-\infty$. Then the function
\[
g(t):=u(e^{-t/2})=\log \varphi(e^{-t/2}), \qquad t>0,
\]
is convex on $(0,\infty)$.
\end{lemma}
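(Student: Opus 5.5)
The plan is to reduce convexity of $g$ to the classical fact that a radial subharmonic function is a convex function of $\log|z|$. Write $u(z)=U(|z|)$ with $U(r):=\log\varphi(r)$. Since $t\mapsto e^{-t/2}$ sends $(0,\infty)$ onto $(0,1)$ and $\log e^{-t/2}=-t/2$ is affine, we have $g(t)=U(e^{-t/2})=V(-t/2)$, where $V(s):=U(e^{s})$ for $s<0$. Precomposing with an affine map preserves convexity, so it suffices to show that $V$ is convex on $(-\infty,0)$.

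\textbf{Step 1: $u$ is finite on $\D\setminus\{0\}$.} Since $\varphi\not\equiv 0$, we have $u\not\equiv-\infty$. Apply Lemma~\ref{prop of subharmonic function} with $w=0$ and any $\rho\in(0,1)$; this needs $\D$ to be a domain and $\overline{D(0,\rho)}\subset\D$. Because $u$ is radial, the circle mean at radius $\rho$ is just $U(\rho)$, so $U(\rho)>-\infty$ for every $\rho\in(0,1)$. Upper semicontinuity of $u$ gives that $U$ is locally bounded above, so $V$ is a real-valued function on $(-\infty,0)$ that is locally bounded above.

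\textbf{Step 2: convexity of $V$.} The map $\zeta\mapsto e^{\zeta}$ is holomorphic from the left half-plane $H=\{\operatorname{Re}\zeta<0\}$ onto $\D\setminus\{0\}$. Since subharmonicity is preserved under composition with holomorphic maps, $v(\zeta):=u(e^{\zeta})=V(\operatorname{Re}\zeta)$ is subharmonic on $H$ and depends only on $x=\operatorname{Re}\zeta$. I then show that a subharmonic function on a strip that depends only on $x$ is convex in $x$.

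Take $a<b<0$ and let $\ell$ be the affine function of $x$ with $\ell(a)=V(a)$ and $\ell(b)=V(b)$. I claim $V\le \ell$ on $[a,b]$. Suppose instead that $w:=V-\ell$ has $\sup_{[a,b]}w=M>0$. The function $w(\operatorname{Re}\zeta)$ is subharmonic on the strip $a<\operatorname{Re}\zeta<b$, bounded above, and upper semicontinuous up to the closed strip with boundary values at most $0$.

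To rule out the interior maximum, I apply the maximum principle on rectangles $[a,b]\times[-R,R]$ to the perturbed function
\[
w(\operatorname{Re}\zeta)-\varepsilon\,\operatorname{Re}\cosh\bigl(c(\zeta-\tfrac{a+b}{2})\bigr),
\]
with $c<\pi/(b-a)$. The correction term is harmonic. On the strip it is positive, because its real part is $\cosh(c y)\cos\bigl(c(x-\tfrac{a+b}{2})\bigr)$ with a cosine factor bounded below by a positive constant, and it tends to $+\infty$ as $|y|\to\infty$. So on the horizontal sides of the rectangle the perturbed function is eventually negative, and on the vertical sides it is at most $0$. Letting $R\to\infty$ and then $\varepsilon\to0$ gives $w\le0$, a contradiction. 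Alternatively, I would just cite the standard fact that radial subharmonic functions are convex in $\log r$ (Ransford, Theorem~2.6.8 or similar).

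The main obstacle is Step 2: handling upper semicontinuity at the endpoints and the unboundedness of the strip. The Phragmén–Lindelöf perturbation above handles both. Combining the steps, $V$ is convex, hence so is $g(t)=V(-t/2)$.
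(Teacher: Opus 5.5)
Your proof is correct, and your Step 1 is exactly the paper's first step: the circle mean at radius $\rho$ via Lemma~\ref{prop of subharmonic function}, combined with radiality. For the convexity you use the same underlying idea as the paper but in different coordinates, and this costs you extra work.

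The paper stays in the disk. For $r_2<r_1$ it compares $u$, on the annulus $\{r_2<|z|<r_1\}$, with the radial harmonic function $h$ that is affine in $\log|z|$ and agrees with $u$ on the two boundary circles. Because the annulus is bounded, the ordinary maximum principle gives $u\le h$ at once. That step uses only upper semicontinuity of $u$ at the boundary circles.

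Your lift by $\zeta\mapsto e^{\zeta}$ carries that annulus to the strip $\log r_2<\operatorname{Re}\zeta<\log r_1$, and carries $h$ to your affine $\ell(x)$. So the comparison is identical, but the domain is now unbounded. You therefore need the Phragm\'en--Lindel\"of perturbation, or a cited extended maximum principle for subharmonic functions bounded above on a strip, to control the behaviour as $|y|\to\infty$. What you gain is a clean isolation of the one-variable fact that a subharmonic function of $\operatorname{Re}\zeta$ alone is convex. What the paper gains is brevity, with no growth control needed. Working directly on the annulus with $\ell(\log|z|)$ would have spared you the unbounded-domain issue entirely.

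One slip needs fixing. As written, $\operatorname{Re}\cosh\bigl(c(\zeta-\tfrac{a+b}{2})\bigr)=\cosh\bigl(c(x-\tfrac{a+b}{2})\bigr)\cos(cy)$. This oscillates in $y$ and is neither positive on the strip nor divergent as $|y|\to\infty$. The function you want is $\cos\bigl(c(\zeta-\tfrac{a+b}{2})\bigr)$, whose real part is $\cosh(cy)\cos\bigl(c(x-\tfrac{a+b}{2})\bigr)$. That is the expression you actually use, and with this correction the rectangle argument goes through exactly as you describe.
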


\begin{proof}
Since $\varphi$ is logarithmically subharmonic, the function
$
u=\log\varphi
$
is subharmonic on $\mathbb D$ and is not identically $-\infty$. Since $\varphi$ is radial,
$u$ is radial as well. Hence there exists a function, still denoted by $u$, such that
$u(z)=u(|z|),$ for any $z\in \mathbb D.$

We first claim that
\[
u(r)>-\infty,\qquad 0<r<1.
\]
Suppose, to the contrary, that there exists \(r_{0}\in(0,1)\) such that
\[
u(r_{0})=-\infty.
\]
Since \(u\) is radial, we have
\[
u(z)=u(|z|)=-\infty,\qquad |z|=r_{0}.
\]
Hence
\[
\frac{1}{2\pi}\int_{0}^{2\pi} u \left(r_{0}e^{i\theta}\right) \dd \theta=-\infty.
\]
On the other hand, \(u\) is subharmonic on \(\mathbb D\) and \(u\not\equiv -\infty\). Applying Lemma \ref{prop of subharmonic function} with \(D=\mathbb D\), \(w=0\), and \(\rho=r_{0}\), we obtain
\[
\frac{1}{2\pi}\int_{0}^{2\pi} u \left(r_{0}e^{i\theta}\right) \dd \theta>-\infty,
\]
which is a contradiction. Therefore, \text{for every } $r\in(0,1)$, we have
$
u(r)>-\infty.
$

We now prove that $g$ is convex. Fix $t_1,t_2>0$ and $\lambda\in(0,1)$, and set
\[
t:=(1-\lambda)t_1+\lambda t_2.
\]
Let
\[
r_1:=e^{-t_1/2},\qquad r_2:=e^{-t_2/2},\qquad r:=e^{-t/2}.
\]
Then
\[
r=r_1^{ 1-\lambda}r_2^{ \lambda}.
\]
Without loss of generality, assume $t_1<t_2$, so that $r_1>r_2$.

Consider the annulus
\[
A:=\{z\in\mathbb C:\ r_2<|z|<r_1\}.
\]
Define
\[
h(z):=
\frac{\log(r_1/|z|)}{\log(r_1/r_2)} u(r_2)
+
\frac{\log(|z|/r_2)}{\log(r_1/r_2)} u(r_1),
\qquad z\in A.
\]
Since $\log|z|$ is harmonic on $A$, the function $h$ is harmonic on $A$.
Moreover, $h$ is radial and satisfies
\[
h(\zeta)=u(r_1)\quad \text{for }|\zeta|=r_1,
\qquad
h(\zeta)=u(r_2)\quad \text{for }|\zeta|=r_2.
\]

Now $u-h$ is subharmonic on $A$, because $u$ is subharmonic and $h$ is harmonic.
Also, for every boundary point $\zeta\in\partial A$, we have
\[
\limsup_{A\ni z\to \zeta}(u(z)-h(z))\le 0.
\]
Indeed, if $|\zeta|=r_1$, then by radiality,
\[
\limsup_{A\ni z\to\zeta}u(z)\le u(r_1)=h(\zeta),
\]
and similarly, if $|\zeta|=r_2$, then
\[
\limsup_{A\ni z\to\zeta}u(z)\le u(r_2)=h(\zeta).
\]
Hence, by the maximum principle for subharmonic functions,
\[
u(z)\le h(z), \qquad z\in A.
\]
In particular, for every $z$ with $|z|=r$, $u(r)\le h(r)$.

Since $r=r_1^{1-\lambda}r_2^\lambda,$ we get
\[
\log\frac{r_1}{r}
=
\log\frac{r_1}{r_1^{1-\lambda}r_2^\lambda}
=
\lambda \log\frac{r_1}{r_2},
\]
and
\[
\log\frac{r}{r_2}
=
\log\frac{r_1^{1-\lambda}r_2^\lambda}{r_2}
=
(1-\lambda)\log\frac{r_1}{r_2}.
\]
Therefore
\[
h(r)
=
\lambda u(r_2)+(1-\lambda)u(r_1).
\]
Thus
\[
u(r)\le (1-\lambda)u(r_1)+\lambda u(r_2).
\]

Recalling that $r=e^{-t/2}$, $r_1=e^{-t_1/2}$, and $r_2=e^{-t_2/2}$, we obtain
\[
u(e^{-t/2})
\le
(1-\lambda)u(e^{-t_1/2})
+
\lambda u(e^{-t_2/2}),
\]
that is,
\[
g\big((1-\lambda)t_1+\lambda t_2\big)
\le
(1-\lambda)g(t_1)+\lambda g(t_2).
\]
Hence $g$ is convex on $(0,\infty)$.
\end{proof}

\begin{lemma}\label{lem:convex-implies-radial-logsubharmonic}
Let $h:(0,\infty)\to(0,\infty)$ be such that $\log h$ is convex on $(0,\infty)$.
Define
\[
\varphi(z)=\varphi(|z|):=\frac{h(-2\log|z|)}{|z|^2},
\qquad 0<|z|<1.
\]
Then $\varphi$ is logarithmically subharmonic on $\mathbb D\setminus\{0\}$.

If, in addition, $\varphi$ is bounded above in a neighborhood of $0$
(for example, if there exists $T>0$ such that
\[
\sup_{t\ge T} e^t h(t)<\infty),
\]
then $\varphi$ extends to a logarithmically subharmonic function on $\mathbb D$.
\end{lemma}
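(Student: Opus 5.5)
The plan is to write $\log\varphi$ in logarithmic coordinates. For $0<|z|<1$, put $s=\log|z|\in(-\infty,0)$. Then
\[
\log\varphi(z)=\log h(-2s)-2s=:G(s).
\]
The map $s\mapsto \log h(-2s)$ is convex, since it is a convex function composed with an affine map. The term $-2s$ is linear. Hence $G$ is convex on $(-\infty,0)$. Convexity also makes $G$ continuous there, so $\log\varphi$ is a continuous, finite-valued function on the punctured disk.

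For a radial function $v(z)=G(\log|z|)$, subharmonicity on the annular region $\mathbb D\setminus\{0\}$ is equivalent to convexity of $G$ in $\log r$. I would prove the needed direction directly, without smoothness. A convex $G$ is the supremum of the affine functions $a s+b$ lying below it, namely its supporting lines. Each $a\log|z|+b$ is harmonic on $\mathbb D\setminus\{0\}$. Therefore $\log\varphi$ is the pointwise supremum of a family of harmonic functions. This supremum is continuous, because $G$ is, so it is in particular upper semicontinuous, and hence subharmonic. This proves the first assertion.

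For the extension, set $u=\log\varphi$ on $\mathbb D\setminus\{0\}$. Under the sufficient condition, take $t=-2\log|z|\ge T$, which means $|z|\le e^{-T/2}$. Then
\[
\varphi(z)=e^{t}h(t)\le M,
\]
so $u$ is bounded above near $0$. I would then invoke the removable singularity theorem for subharmonic functions (Ransford, Theorem 3.6.1). A subharmonic function on a punctured neighbourhood that is bounded above near a polar set, here the single point $\{0\}$, extends uniquely to a subharmonic function via
\[
u(0):=\limsup_{z\to0}u(z).
\]
This value lies in $[-\infty,\log M]$. Correspondingly, define
\[
\varphi(0)=\limsup_{z\to 0}\varphi(z)\in[0,M].
\]
Then $\log\varphi$ is subharmonic on $\mathbb D$, so $\varphi$ is logarithmically subharmonic on $\mathbb D$.

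The only delicate step is this last one: we must make sure the removable-singularity statement applies with merely an upper bound, and allows the value $u(0)=-\infty$. If I want to avoid citing it, I would argue by hand. Add the subharmonic perturbation $\varepsilon\log|z|$, which tends to $-\infty$ at $0$. Then apply the maximum principle on the disk $\{|z|<\rho\}$ with the origin removed, and let $\varepsilon\to0$. This shows that the upper semicontinuous regularization satisfies the sub-mean value inequality at $0$. Since $u$ is radial, one can check this even more directly: the circle average of $u$ on $|z|=r$ equals $G(\log r)$. A convex function of $\log r$ that is bounded above as $\log r\to-\infty$ must be nondecreasing. Hence $G(\log r)\ge \lim_{\rho\to0}G(\log\rho)=u(0)$, which is exactly the mean value inequality at the origin.
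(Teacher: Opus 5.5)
Your proof is correct. It reaches the result on the punctured disk by a different key lemma than the paper uses.

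The paper sets $g(t)=\log h(t)+t$ and notes that $U(w)=g(\operatorname{Re} w)$ is subharmonic on the right half-plane. It then composes with a local holomorphic branch $F(z)=-2\Log z$, relying on invariance of subharmonicity under holomorphic maps, so that $U\circ F=\log\varphi$.

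You instead write $\log\varphi=G(\log|z|)$ with $G$ convex and express $G$ as the supremum of its supporting lines. This exhibits $\log\varphi$ as a continuous supremum of the harmonic functions $a\log|z|+b$. The supporting line at $\log|z_0|$ gives the sub-mean value inequality at $z_0$ directly. So you need only the elementary fact that an upper semicontinuous supremum of harmonic functions is subharmonic. The paper's route instead uses two standard facts: subharmonicity of $g(\operatorname{Re}w)$, and the composition theorem, which for nonsmooth subharmonic functions is a genuinely deeper result. It is shorter if those facts are taken for granted, while yours is self-contained.

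For the extension at $0$, you and the paper both invoke the removable singularity theorem. Your additional radial check alone already suffices:
\[
u(0)=\lim_{s\to-\infty}G(s)\le G(\log r)=\frac{1}{2\pi}\int_0^{2\pi}u(re^{i\theta})\,\dd\theta .
\]
This uses that a convex $G$ bounded above at $-\infty$ must be nondecreasing. The check makes the argument independent of the cited theorem, so the perturbation sketch with $\varepsilon\log|z|$ is unnecessary. It also shows explicitly that the value $u(0)=-\infty$ causes no trouble, for instance when $h(t)=e^{-2t}$ and $\varphi=|z|^2$.
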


\begin{proof}
Since $h>0$, the function
\[
k(t):=\log h(t), \qquad t>0,
\]
is well defined. By assumption, $k$ is convex on $(0,\infty)$.

Define
\[
g(t):=k(t)+t=\log h(t)+t, \qquad t>0.
\]
Since the sum of a convex function and an affine function is convex, $g$ is convex on
$(0,\infty)$.

Now consider the right half-plane
\[
\mathbb H:=\{w\in\mathbb C: \operatorname{Re} w>0\},
\]
and define
\[
U(w):=g(\operatorname{Re} w), \qquad w\in\mathbb H.
\]
Then $U$ is subharmonic on $\mathbb H$.

%Fix $w_0=x_0+iy_0\in\mathbb H$, and let $\rho>0$ be such that
%$\overline{D(w_0,\rho)}\subset\mathbb H$. Then $x_0>\rho$, and for every $\theta\in[0,2\pi]$,
%\[
%\operatorname{Re}(w_0+\rho e^{i\theta})=x_0+\rho\cos\theta>0.
%\]
%Therefore
%\[
%\frac1{2\pi}\int_0^{2\pi} U(w_0+\rho e^{i\theta}) \dd \theta
%=
%\frac1{2\pi}\int_0^{2\pi} g(x_0+\rho\cos\theta) \dd \theta.
%\]
%Since $g$ is convex and
%\[
%\frac1{2\pi}\int_0^{2\pi}(x_0+\rho\cos\theta) \dd \theta=x_0,
%\]
%Jensen's inequality gives
%\[
%g(x_0)
%\le
%\frac1{2\pi}\int_0^{2\pi} g(x_0+\rho\cos\theta) \dd \theta.
%\]
%Hence
%\[
%U(w_0)\le \frac1{2\pi}\int_0^{2\pi} U(w_0+\rho e^{i\theta}) \dd \theta.
%\]
%Thus $U$ satisfies the sub-mean inequality, and therefore $U$ is subharmonic on $\mathbb H$.

Next fix $z_0\in\mathbb D\setminus\{0\}$. Choose a simply connected neighborhood
$V\subset\mathbb D\setminus\{0\}$ of $z_0$ on which a holomorphic branch of the logarithm
exists; denote it by $\Log z$. Define
\[
F(z):=-2\Log z, \qquad z\in V.
\]
Then $F$ is holomorphic on $V$, and
\[
\operatorname{Re} F(z)=\operatorname{Re}(-2\Log z)=-2\log|z|>0, \qquad z\in V,
\]
because $0<|z|<1$. Hence $F(V)\subset\mathbb H$.

Since $U$ is subharmonic on $\mathbb H$ and $F$ is holomorphic, the composition $U\circ F$ is subharmonic on $V$.
On the other hand, for $z\in V$,
\[
(U\circ F)(z)
=
g(\operatorname{Re} F(z))
=
g(-2\log|z|).
\]
By the definition of $g$,
\[
g(-2\log|z|)
=
\log h(-2\log|z|)-2\log|z|.
\]
Since
\[
\varphi(z)=\frac{h(-2\log|z|)}{|z|^2},
\]
it follows that
\[
(U\circ F)(z)=\log\varphi(z).
\]
Therefore $\log\varphi$ is subharmonic on $V$. Since $z_0$ was arbitrary, we conclude that $\log\varphi$
is subharmonic on $\mathbb D\setminus\{0\}$. Equivalently, $\varphi$ is logarithmically
subharmonic on $\mathbb D\setminus\{0\}$.

For the last assertion, assume that $\varphi$ is bounded above in a neighborhood of $0$.
Then $\log\varphi$ is bounded above in a punctured neighborhood of $0$ and is subharmonic
on $\mathbb D\setminus\{0\}$. By the removable singularity theorem for subharmonic functions,
the function
\[
\widetilde u(z):=
\begin{cases}
\log\varphi(z), & z\neq 0,\\[4pt]
\limsup\limits_{\zeta\to 0}\log\varphi(\zeta), & z=0,
\end{cases}
\]
is subharmonic on $\mathbb D$. Setting
\[
\widetilde\varphi(z):=e^{\widetilde u(z)},
\]
we obtain a logarithmically subharmonic extension of $\varphi$ to the whole disk.
This completes the proof.
\end{proof}

\begin{proof}[Proof of Theorem \ref{thm:new-shimorin-kernel-characterization}]
\textbf{Necessity.}
Assume that there exists a radial logarithmically subharmonic weight
$\omega$ on $\mathbb D$ such that
\[
\int_0^1 r\omega(r) \dd r<\infty
\]
and
\[
K_\omega(z,\lambda)=S_\nu(z,\lambda), \qquad z,\lambda\in\mathbb D.
\]
By the coefficient formulas for the kernels $K_\omega$ and $S_\nu$, it follows that
for every $n\in\mathbb N$,
\[
\left(\int_0^1 \frac{1 - r^{n + 1}}{1 - r} \dd \nu(r)\right)
\left(2\int_0^1 r^{2n+1}\omega(r) \dd r\right)=1.
\]
Define the function $h:(0,\infty)\to(0,\infty)$ by
\[
h(t):=e^{-t}\omega(e^{-t/2}), \qquad t>0.
\]
By Lemma~\ref{lem:radial-logsubharmonic-implies-convex}, the function $t\mapsto \log\omega(e^{-t/2})$ is convex on $(0,\infty)$. Hence
\[
\log h(t)=\log\omega(e^{-t/2})-t
\]
is also convex on $(0,\infty)$, because subtracting an affine function preserves convexity.
Thus $h$ is logarithmically convex.

Next, by the change of variables $r=e^{-t/2}$, we obtain for each $n\in\mathbb N$,
\[
\int_0^\infty e^{-nt}h(t) \dd t =\int_0^\infty e^{-nt}e^{-t}\omega(e^{-t/2}) \dd t = 2\int_0^1 r^{2n+1}\omega(r) \dd r.
\]
Therefore
\[
\left(\int_0^1 \frac{1 - r^{n + 1}}{1 - r} \dd \nu(r)\right)
\left(\int_0^\infty e^{-nt}h(t) \dd t\right)=1,
\qquad n\in\mathbb N.
\]

It remains to verify the growth condition. Since $\log\omega$ is subharmonic on $\mathbb D$,
it is locally bounded above near $0$. Hence $\omega$ is bounded above in some neighborhood
of $0$. Therefore, for all sufficiently large $t$,
\[
e^t h(t)=\omega(e^{-t/2})
\]
is bounded, that is, there exists $T>0$ such that
\[
\sup_{t\ge T} e^t h(t)<\infty.
\]

\medskip

\noindent
\textbf{Sufficiency.}
Assume that there exists a logarithmically convex function
$h:(0,\infty)\to(0,\infty)$ such that
\[
\left(\int_0^1 \frac{1 - r^{n + 1}}{1 - r} \dd \nu(r)\right)
\left(\int_0^\infty e^{-nt}h(t) \dd t\right)=1,
\qquad n\in\mathbb N,
\]
and such that
\[
\sup_{t\ge T} e^t h(t)<\infty
\]
for some $T>0$.

Define
\[
\omega(z):=\frac{h(-2\log|z|)}{|z|^2},
\qquad 0<|z|<1.
\]
By Lemma~\ref{lem:convex-implies-radial-logsubharmonic}, the function $\omega$
is logarithmically subharmonic on $\mathbb D\setminus\{0\}$, and the growth condition
\[
\sup_{t\ge T}e^t h(t)<\infty
\]
implies that $\omega$ is bounded above near $0$. Hence $\omega$ extends to a
logarithmically subharmonic function on $\mathbb D$. Since the original function
on $\mathbb D\setminus\{0\}$ is radial, the extension is radial as well.
Since $h(t)>0$ for all $t>0$, we have $\omega(r)>0$ for all $0<r<1$.
Hence $\omega([r,1))>0$ for every $0<r<1$, so $L_a^2(\omega)$ is a reproducing kernel Hilbert space.
We next show that
\[
\int_0^1 r\omega(r) \dd r<\infty.
\]
Taking $n=0$, we get
\[
\nu([0,1])\int_0^\infty h(t) \dd t=1.
\]
Since $\nu([0,1])>0$, it follows that
\[
\int_0^\infty h(t) \dd t=\frac{1}{\nu([0,1])}<\infty.
\]
Using again the change of variables $t=-2\log r$, we obtain
\[
\int_0^1 r\omega(r) \dd r
=\int_0^1 \frac{h(-2\log r)}{r} \dd r
=\frac{1}{2}\int_0^\infty h(t) \dd t
=\frac{1}{2 \nu([0,1])}<\infty.
\]

Finally, for every $n\in\mathbb N$,
\begin{align*}
2\int_0^1 r^{2n+1}\omega(r) \dd r
&=2\int_0^1 r^{2n-1} h(-2\log r) \dd r\\
&=\int_0^\infty e^{-nt}h(t) \dd t.
\end{align*}
Hence, for each $n\in\mathbb N$,
\[
\left(\int_0^1 \frac{1 - r^{n + 1}}{1 - r} \dd \nu(r)\right)
\left(2\int_0^1 r^{2n+1}\omega(r) \dd r\right)=1.
\]
By the coefficient formulas for $K_\omega$ and $S_\nu$, this implies
\[
K_\omega(z,\lambda)=S_\nu(z,\lambda), \qquad z,\lambda\in\mathbb D.
\]
This completes the proof.
\end{proof}

\end{document}